\newtheorem{thm}{Theorem}[section]
\newtheorem{cor}[thm]{Corollary}
\newtheorem{lem}[thm]{Lemma}
\newtheorem*{q}{Question}
\newtheorem{rem}[thm]{Remark}
\newtheorem{ex}[thm]{Example}
\newcommand\C{\mathbb C} 
\newcommand\R{\mathbb R} 
\newcommand\Z{\mathbb Z} 
\renewcommand{\d}{\mathrm d}
\newcommand{\del}[1]{\partial #1}
\renewcommand{\leq}{\leqslant}
\renewcommand{\geq}{\geqslant}
\newcommand{\D}{\operatorname{D}}
\newcommand{\T}{\operatorname{T}} 
\renewcommand{\L}{\operatorname{L}} 
\renewcommand{\S}{\operatorname{S}} 
\renewcommand{\H}{\operatorname{H}} 
\DeclareMathOperator{\interior}{int}
\DeclareMathOperator{\Wh}{Wh}
\DeclareMathOperator{\GL}{GL}
\renewcommand{\O}{\operatorname{O}}
\title{Contact manifolds and Weinstein h-cobordisms}
\author{Sylvain Courte}
\address{UMPA, ENS de Lyon, 46 all\'ee d'Italie 69364 Lyon Cedex 07, FRANCE}
\email{sylvain.courte@ens-lyon.fr}
\urladdr{http://perso.ens-lyon.fr/sylvain.courte/}
\begin{document}

\begin{abstract}
We prove that closed connected contact manifolds of dimension $\geq 5$ related by a flexible Weinstein h-cobordism become contactomorphic after some kind of stabilization. We also provide examples of non-conjugate contact structures on a closed manifold with exact symplectomorphic symplectizations.
\end{abstract}

\maketitle

\tableofcontents

\section{Introduction} 

This paper is a sequel to \cite{C2014}, in which the following phenomenon was observed. If two closed contact manifolds of dimension~$\geq~5$ are related by a flexible Weinstein h-cobordism, then their symplectizations are
exact symplectomorphic. As observed in \cite{C2014} such contact manifolds need not even be diffeomorphic, but we may ask~:

\begin{q}
If two contact structures on a given closed manifold have exact symplectomorphic symplectizations, are they conjugate by a diffeomorphism ?
\end{q}

In this paper we wish to provide partial answers to this question in two different directions. On one hand we prove that contact manifolds related by a flexible Weinstein h-cobordism become contactomorphic after some kind of \emph{stabilization}. Our inspiration comes from the following fact noticed by Hatcher and Lawson in \cite{MR0415640}. Let $M$ and $M'$ be h-cobordant closed connected manifolds of dimension $m$ and let $k$ be any integer satisfying $2 \leq k \leq m-2$, then for $l$ large enough $M \# (\S^k\times \S^{m-k})^{\# l}$ is diffeomorphic to $M' \# (\S^k \times \S^{m-k})^{\# l}$ (where $^{\# l}$ denotes the connected sum iterated $l$ times). We will prove in section \ref{sec:stable} a contact analogue of this result using Morse-Smale theory of Weinstein structures developped by Cieliebak and Eliashberg (see \cite{CE2012}). On the other hand we prove that the answer to the question, as stated, is negative due to the following phenomenon : there are contact structures on a given manifold which are not conjugate as almost-contact structures but have exact symplectomorphic symplectizations, this is the content of section \ref{sec:almost}.

\textbf{Acknowledgments:}
I warmly thank Emmanuel Giroux for his support and François Laudenbach for his interest in this work and his comments on a previous draft of this paper.

\section{A stabilization theorem}\label{sec:stable}

\subsection{Hatcher's and Lawson's remark}

Let us briefly explain the remark by Hatcher and Lawson mentioned in the introduction. Let $(W, M, M')$ be an h-cobordism of dimension $m+1 \geq 6$. For all $2 \leq k \leq m-2$, there is an ordered Morse function on $W$ with only critical points of index $k$ and $k+1$. Let $N$ be a level set separating the critical points of index $k$ and $k+1$. Since the homology of the pair $(W, M)$ vanishes there must be an equal number $l$ of critical points of each index. The key point is that, in such a situation, handles of index $k$ are trivially attached to $M$ and, dually, handles of index $k+1$ are trivially attached to $M'$; by that we mean that the attaching spheres bound disks and have trivial normal framings (induced by the disks). In particular the level set $N$ is diffeomorphic to $M \# (\S^k \times \S^{m-k})^{\# l}$ as well as to $M' \# (\S^k \times \S^{m-k})^{\# l}$. In \cite{MR0415640}, this key point is proved using \emph{Smale's trading trick} which consists in replacing a critical point of index $k$ by a critical point of index $k+2$ (birth of a pair of critical points of index $(k+1,k+2)$ followed by the death of a pair of critical points of index $(k, k+1)$); the fact that the critical points of index $k$ can be cancelled with a critical point of index $k+1$ implies that its attaching sphere is trivial (see lemma \ref{cancel} for a proof in a contact setting). In fact, in the extreme case $k=2$, it is not proved that the $3$-handle is trivially attached to $M'$ because the critical points of index $3$ cannot be replaced by a critical point of index $1$ (likewise in the case $k=m-2$). In the context of Weinstein structures of dimension $2n$, the trading trick cannot work for a critical point of index $n-1$ because it would have to be replaced by a critical point of index $n+1$, so we will use a different argument which has the advantage to treat the extreme cases $k=2$ and $k=m-2$ as well.

\subsection{Main results and proofs}
For $n\geq 3$ and $ 2 \leq k \leq n-1$ we consider the (subcritical) Liouville manifold:
\[(\T^* \S^k \times \R^{2(n-k)}, \lambda = p \d q +\frac{1}{2} \sum_{i=1}^{n-k} r_i^2 \d \theta_i).\]
where $p \d q$ is the canonical $1$-form on $\T^* \S^k$ and $(r_i, \theta_i)$ are multipolar coordinates in $\R^{2(n-k)}$. The contact manifold at infinity of this Liouville manifold is diffeomorphic to $\S^k \times \S^{2n-k-1}$; we will always consider this contact structure on $\S^k \times \S^{2n-k-1}$. Note that, as it follows from Weinstein tubular neighborhood theorem, this contact manifold is the model for the boundary of a small tube around any isotropic sphere $\S^k$ with trivial symplectic normal bundle in a symplectic manifold of dimension $2n$.

\begin{thm}\label{main:stable}
Let $(M, \xi)$ and $(M', \xi')$ be closed connected contact manifolds of dimension $2n-1 \geq 5$. Assume there is a flexible Weinstein h-cobordism $W$ from $(M, \xi)$ to $(M', \xi')$. Denote by $l$ the minimal integer such that the Whitehead torsion of $W$ is represented by a matrix of size $l$. Then for any integer $k$ satisfying $2\leq k \leq n-1$, we have:
\[M\#(\S^k \times \S^{2n-k-1})^{\#l} \text{ is contactomorphic to } M' \# (\S^k \times \S^{2n-k-1})^{\#l}\]
\end{thm}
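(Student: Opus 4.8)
The plan is to follow the Hatcher--Lawson strategy recalled above, but to carry out every handle manipulation inside the flexible Weinstein category by appealing to the Cieliebak--Eliashberg h-principle (\cite{CE2012}), and to replace Smale's trading trick---which is unavailable here because it would create critical points of index $n+1$---by a direct cancellability argument.

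First I would put $W$ into a normal form. Since $W$ is a flexible Weinstein h-cobordism and a smooth h-cobordism of dimension $2n \geq 6$, the flexible h-principle lets me realize, by a Weinstein homotopy, any smooth handle manipulation that respects the index bound $\leq n$. Fix $k$ with $2 \leq k \leq n-1$, so that both $k$ and $k+1$ are admissible Weinstein indices. Using the standard algebraic handle cancellation and creation over $\Z[\pi_1 M]$, I would arrange an ordered Weinstein Morse function on $W$ whose only critical points have index $k$ and $k+1$, with exactly $l$ of each, the differential $\partial\colon C_{k+1}\to C_k$ being an invertible $l\times l$ matrix $A$ over $\Z[\pi_1 M]$ whose class in $\Wh(\pi_1 M)$ is the Whitehead torsion of $W$; here $l$ is minimal precisely because it is the smallest size of a matrix representing this torsion. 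Let $N$ be a regular contact level set separating the index-$k$ from the index-$(k+1)$ critical points.

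For the $M$-side I would show that the $l$ isotropic $(k-1)$-spheres along which the index-$k$ handles are attached to $(M,\xi)$ are each trivial, i.e.\ bound isotropic disks with the standard conformal symplectic framing. Because $A$ is invertible over $\Z[\pi_1 M]$, after handle slides---realized Weinstein-ly through flexibility---each index-$k$ critical point can be made to cancel with some index-$(k+1)$ critical point; by Lemma~\ref{cancel} this forces its attaching sphere to be trivial. The point is that triviality of the attaching sphere is a statement about $M$ alone and survives even though I never perform the cancellation globally (indeed the torsion is in general nonzero). By the Weinstein tubular neighborhood description of the model recalled above, attaching a handle along a trivial isotropic $(k-1)$-sphere amounts to a contact connected sum with the ideal boundary $\S^k\times\S^{2n-k-1}$ of $\T^*\S^k\times\R^{2(n-k)}$. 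Crossing all $l$ handles therefore gives a contactomorphism $N \cong M\#(\S^k\times\S^{2n-k-1})^{\#l}$.

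The hard part, and the point where the argument must depart from Hatcher--Lawson, is the $M'$-side. A Weinstein cobordism cannot be turned upside down---the dual of an index-$(k+1)$ handle has index $2n-k-1>n$---so there is no formal duality producing the connected sum over $M'$, and this is exactly why the extreme cases must be handled by hand. I would instead run the cancellability argument symmetrically for the index-$(k+1)$ handles attached to $N$, which when $k+1=n$ are the critical handles carrying the flexibility (loose Legendrian attaching spheres) and when $k+1<n$ are subcritical and automatically flexible. Invertibility of $A$ read the other way, together with the contact cancellation lemma, should show that the relevant spheres of these handles are trivial in $M'$ and that $N$ is recovered from $(M',\xi')$ by $l$ trivial handle attachments of the same model type, giving $N \cong M'\#(\S^k\times\S^{2n-k-1})^{\#l}$. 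Making this step precise---in particular identifying the correct trivial spheres in $M'$ and matching their contact framings with the model---is where flexibility and Lemma~\ref{cancel} do the work that Smale's trading trick does in the smooth setting, uniformly in $2\leq k\leq n-1$. Composing the two contactomorphisms $M\#(\S^k\times\S^{2n-k-1})^{\#l}\cong N\cong M'\#(\S^k\times\S^{2n-k-1})^{\#l}$ then yields the theorem.
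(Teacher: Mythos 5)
Your overall architecture --- normal form with $l$ critical points of each of the indices $k$ and $k+1$, identification of the intermediate level $N$ with an $l$-fold connected sum from each side via a contact cancellation lemma, flexibility/looseness supplying the required isotropic and Legendrian isotopies --- matches the paper's. But the central step has a genuine gap. You claim that because $A \in \GL_l(\Z[\pi_1 M])$ is invertible, ``after handle slides each index-$k$ critical point can be made to cancel with some index-$(k+1)$ critical point.'' Handle slides realize elementary row and column operations, and reducing a column of $A$ to a standard basis vector times $\pm g$ (with $g\in\pi_1 M$) by such operations is exactly what a nonzero Whitehead torsion obstructs. Concretely, if $l=1$ and $A=(u)$ for a nontrivial unit $u\in\Z[\pi_1 M]$ not of the form $\pm g$ (such units exist, e.g.\ for $\pi_1 M=\Z/5$), there are no handle slides available at all and the unique index-$k$ point can never be put in cancelling position with the unique index-$(k+1)$ point; yet the theorem still asserts $M\#\,\S^k\times\S^{2n-k-1}$ is contactomorphic to $M'\#\,\S^k\times\S^{2n-k-1}$. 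In the smooth Hatcher--Lawson argument this is resolved by the trading trick, i.e.\ by introducing auxiliary index-$(k+2)$ handles, which is precisely what is unavailable here when $k=n-1$, so you cannot fall back on it either.

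The missing ingredient is stabilization by newly created cancelling pairs of index $(k,k+1)$. The paper creates $l$ such pairs below $N$ (so the boundary operator becomes the $2l\times 2l$ matrix $A\oplus 1$), performs handle slides among the index-$k$ points realizing the standard identity $A\oplus 1 \sim 1\oplus A$ by elementary operations, and then uses the Whitney trick together with Murphy's (or Gromov's) h-principle to make the original $l$ pairs geometrically cancelling across $N$ while the entire torsion is carried by the new block lying strictly below $N$; Lemma \ref{cancel} applied inductively to these $l$ clean pairs gives $N\cong M'\#(\S^k\times\S^{2n-k-1})^{\#l}$, and the mirror procedure above $N$ gives the $M$ side. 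This is also where the minimality of $l$ as a \emph{matrix size} (not merely as a count of critical points) enters the statement. Note finally that even where individual cancellability can be arranged, Lemma \ref{cancel} as stated applies to a cobordism with exactly two critical points, so extracting the simultaneous triviality of all $l$ attaching spheres --- which is what the $l$-fold connected sum decomposition of $N$ actually requires --- needs the disjunction and reordering of critical values that your sketch leaves implicit.
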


In the statement above, the symbol $\#$ denotes the contact connected sum.

For contact manifolds that are already "sufficiently stabilized", we get the following partial answer to the question raised in the introduction.

\begin{cor}\label{main:cor}
Let $(M, \xi)$ be a closed connected contact manifold of dimension $2n-1 \geq 5$ contactomorphic to $(N, \zeta) \# (\S^k \times \S^{2n-k-1})^{\#l}$ for some closed contact manifold $(N, \zeta)$ and some integers $l\geq 0$ and $2 \leq k \leq n-1$. Assume that the map $\GL_l(\Z[\pi_1 M]) \to \Wh(\pi_1 M)$ is surjective. Then any contact manifold $(M', \xi')$ related to $(M, \xi)$ by a flexible Weinstein h-cobordism is contactomorphic to it.
\end{cor}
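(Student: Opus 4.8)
The plan is to derive Corollary~\ref{main:cor} from Theorem~\ref{main:stable} by showing that, under the surjectivity hypothesis, the stabilization already present in $(M,\xi)$ absorbs any further stabilization coming from the h-cobordism. First I would apply Theorem~\ref{main:stable} to the flexible Weinstein h-cobordism $W$ from $(M,\xi)$ to $(M',\xi')$. This produces an integer $l'$ (the minimal size of a matrix representing the Whitehead torsion $\tau(W) \in \Wh(\pi_1 M)$) such that
\[M \# (\S^k \times \S^{2n-k-1})^{\#l'} \text{ is contactomorphic to } M' \# (\S^k \times \S^{2n-k-1})^{\#l'}.\]
By hypothesis $(M,\xi)$ is itself contactomorphic to $(N,\zeta)\#(\S^k \times \S^{2n-k-1})^{\#l}$, so the left-hand side is contactomorphic to $(N,\zeta)\#(\S^k \times \S^{2n-k-1})^{\#(l+l')}$.

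The heart of the argument is to cancel the extra summands on the right. The key point I would exploit is that the surjectivity of $\GL_l(\Z[\pi_1 M]) \to \Wh(\pi_1 M)$ lets us realize any element of $\Wh(\pi_1 M)$, and in particular $\tau(W)$, as the torsion of an h-cobordism built on $(M,\xi)$ using only $l$ pairs of cancelling handles of index $(k,k+1)$. Concretely, I would construct an s-cobordism (torsion zero) from $(M,\xi)$ to $(M,\xi)$ whose underlying manifold realizes the $l'$-fold stabilization, and then use the surjectivity to trade the stabilization parameter: since the torsion $\tau(W)$ is represented by a matrix of size $l'$, but it also lies in the image of $\GL_l$, there is an h-cobordism on $M$ with the same torsion requiring only the $l$ handles already available as connected summands in $M$. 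The geometric content is that a flexible Weinstein h-cobordism whose attaching data is concentrated in the region already filled by $(\S^k \times \S^{2n-k-1})^{\#l}$ can be canceled by sliding handles over these standard summands, so that the stabilization by $\S^k \times \S^{2n-k-1}$ is absorbed rather than accumulated.

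I expect the main obstacle to be the bookkeeping that matches the algebraic statement about $\GL_l \to \Wh(\pi_1 M)$ with the geometric handle cancellation in the Weinstein category. One must verify that the flexibility of the Weinstein structure is preserved throughout the handle slides and cancellations, and that the contact connected sum summands $\S^k \times \S^{2n-k-1}$ can genuinely serve as hosts for the extra handles with trivial attaching data (using the model of the isotropic sphere $\S^k$ with trivial symplectic normal bundle described before Theorem~\ref{main:stable}). The subtlety is that Theorem~\ref{main:stable} gives contactomorphism only after stabilizing by $l'$ copies, so to obtain a contactomorphism $(M',\xi') \cong (M,\xi)$ with \emph{no} leftover summands, the surjectivity hypothesis must be used to guarantee $\tau(W)$ is already achievable within the $l$ copies present in $M$; this is what lets the right-hand stabilized manifold be recognized as $(M',\xi') \# (\text{absorbed summands})$ where the absorbed part is itself contactomorphic to a piece of $M$. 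Carrying out this identification carefully, while tracking orientations and framings, is the delicate step.
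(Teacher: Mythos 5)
Your starting point is right — the surjectivity of $\GL_l(\Z[\pi_1 M]) \to \Wh(\pi_1 M)$ is used precisely so that the torsion $\tau$ of the cobordism is represented by an $l\times l$ matrix, which makes $l$ stabilizations suffice in Theorem~\ref{main:stable} — but the way you then try to reach the conclusion has a genuine gap. Applying Theorem~\ref{main:stable} directly to $W$ gives $M\#(\S^k\times\S^{2n-k-1})^{\#l'}$ contactomorphic to $M'\#(\S^k\times\S^{2n-k-1})^{\#l'}$, and your plan is then to ``cancel the extra summands on the right.'' This is a cancellation of contact connected summands, which is not a valid operation: knowing $A\# S \cong B\# S$ does not imply $A\cong B$ (the failure of such cancellation is exactly the phenomenon the whole paper is circling around), and nothing in your sketch supplies a substitute. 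The vaguer second half of your argument — realizing $\tau(W)$ by an h-cobordism ``concentrated in the region already filled by the summands'' — gestures at the right picture but never produces a contactomorphism from $M'$ to anything, because $M'$ only appears as the abstract top boundary of $W$ and you have no mechanism to recognize it.

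The paper's proof supplies exactly the two ingredients you are missing. First, it does not apply Theorem~\ref{main:stable} to $W$ at all: it builds an auxiliary h-cobordism $(V,N,N')$ on the \emph{unstabilized} manifold $N$ with the same torsion $\tau$, puts a flexible Weinstein structure on it via Theorem~13.1 of \cite{CE2012}, and applies Theorem~\ref{main:stable} to $V$. Since $\tau$ is an $l\times l$ matrix, the conclusion is $N\#(\S^k\times\S^{2n-k-1})^{\#l}\cong N'\#(\S^k\times\S^{2n-k-1})^{\#l}$, whose left-hand side is $M$ on the nose — no leftover summands ever appear, so no cancellation is needed. Second, to identify $M'$ with $N'\#(\S^k\times\S^{2n-k-1})^{\#l}$, the paper forms the boundary connected sum of $V$ with the trivial Weinstein cobordism on $(\S^k\times\S^{2n-k-1})^{\#l}$, obtaining a flexible Weinstein cobordism from $M$ to $N'\#(\S^k\times\S^{2n-k-1})^{\#l}$ with torsion $\tau$; the s-cobordism theorem makes this diffeomorphic to $W$ rel $M$, and the uniqueness part of the flexibility h-principle (Theorem~14.3 of \cite{CE2012}, together with the uniqueness up to homotopy of the non-degenerate two-form extending $\xi$) then forces the top boundaries to be contactomorphic. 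This last uniqueness step is indispensable and is absent from your proposal.
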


\begin{rem}
\begin{enumerate}
\item For $n \geq 4$ and $k \leq n-2$, we can consider only subcritical Weinstein structures instead of the broader class of flexible ones.
\item As follows from the proof of the $s$-cobordism theorem, the minimal integer $l$ in the statement above equals the minimal number of critical points of index $k$ for a Morse function in normal form of index $(k,k+1)$ (for any $2 \leq k \leq 2n-2$) and also half the minimal number of critical points of any Morse function.
\item For finite cyclic fundamental groups $\pi$, the map $\GL_1(\Z[\pi]) \to \Wh(\pi)$ is surjective, so one connect sum with $\S^k \times \S^{2n-k-1}$ is enough.
\item If two closed contact manifolds have exact symplectomorphic symplectizations, then they are related by an invertible Liouville cobordism. However we do not know whether these invertible Liouville cobordisms are necessarily Weinstein flexible so that theorem \ref{main:stable} applies.
\end{enumerate}
\end{rem}

\begin{ex}
The manifolds $M_1=\L(7,1) \times \S^2$ and $M_2=\L(7,2) \times \S^2$ carry contact structures with exact symplectomorphic symplectizations though they are not diffeomorphic (see \cite{C2014}). It follows from theorem \ref{main:stable} that $M_1 \# \S^2 \times \S^3$ is contactomorphic to $M_2 \# \S^2 \times \S^3$ where $\S^2 \times \S^3 \simeq \del_\infty( \T^*\S^2 \times \R^2)$. From corollary \ref{main:cor}, we also get that for each flexible Weinstein h-cobordism $(W, M_1\# \S^2 \times \S^3, M')$, $M'$ is contactomorphic to $M_1 \# \S^2 \times \S^3$.
\end{ex}

The main tools for the proof of theorem \ref{main:stable} and corollary \ref{main:cor} are the flexibility results of Cieliebak and Eliashberg concerning Weinstein structures. For the sake of brevity, we will often refer directly to the book \cite{CE2012} instead of repeating here many statements.

We start with a lemma.
\begin{lem}\label{cancel}
Let $(W, \omega, X, \phi)$ be a connected Weinstein cobordism of dimension $2n$ from $M$ to $M'$ such that $\phi$ has only two critical points $p$ and $q$ of index $k+1$ and $k$ respectively, with $\phi(q)<\phi(p)$ and such that, in an intermediate level set $N$ between $p$ and $q$, the ascending sphere of $q$ intersects the descending sphere of $p$ transversally in a single point. Then $N$ is contactomorphic to $M\# \S^k\times \S^{2n-k-1}$ as well as to $M' \# \S^k\times \S^{2n-k-1}$.
\end{lem}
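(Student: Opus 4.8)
The plan is to reduce the whole configuration to a single standard model of a cancelling pair and to read off both connected-sum descriptions from that one model. First I would record the two handle descriptions of the intermediate level set $N$. Since $q$ is the only critical point below $N$, the level set $N$ is obtained from $M$ by attaching the Weinstein handle of index $k$ along the isotropic attaching sphere $\Lambda_q$ (the descending sphere of $q$); dually, since $p$ is the only critical point above $N$, the manifold $N$ is obtained from $M'$ by attaching the upside-down handle of $p$ along its co-attaching sphere (the ascending sphere of $p$), a purely smooth statement for now. The content of the lemma is that both of these surgeries are the \emph{trivial} ones, i.e. connected sums with the model $\S^k\times\S^{2n-k-1}=\partial_\infty(\T^*\S^k\times\R^{2(n-k)})$.

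The key step is to exploit the single transverse intersection of the ascending sphere of $q$ with the descending sphere of $p$ in $N$. This is exactly the Morse--Smale cancellation condition, so there is a unique gradient trajectory $\gamma$ joining $p$ to $q$, and I would upgrade it to the Weinstein setting using the cancellation machinery of Cieliebak--Eliashberg \cite{CE2012}: after a Weinstein homotopy, the Weinstein tubular neighborhood theorem puts a neighborhood of the union $D_q\cup\gamma\cup D_p$ of the two descending disks and the connecting trajectory into the standard normal form of a cancelling pair of indices $(k,k+1)$. Concretely, this local model is obtained by attaching the index-$k$ and the cancelling index-$(k+1)$ Weinstein handle to a Darboux ball $(D^{2n},\S^{2n-1})$, and its intermediate hypersurface is precisely $\partial_\infty(\T^*\S^k\times\R^{2(n-k)})=\S^k\times\S^{2n-k-1}$, in agreement with the tube description recalled before the statement.

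With the normal form in hand the two assertions follow symmetrically. Reading the model from below, the attaching sphere $\Lambda_q$ is the standard isotropic $(k-1)$-sphere inside a Darboux ball, so attaching the Weinstein handle along it modifies $M$ only inside that ball and performs the contact connected sum of $M$ with the intermediate level $\S^k\times\S^{2n-k-1}$ of the model; hence $N$ is contactomorphic to $M\#\S^k\times\S^{2n-k-1}$. Reading the very same model from above exchanges the roles of its two boundary spheres and of $M,M'$ while leaving the intermediate level unchanged, so the identical argument, carried out entirely inside the standard model where the contact structure is explicit, yields that $N$ is contactomorphic to $M'\#\S^k\times\S^{2n-k-1}$; in particular this route never reverses the global Weinstein structure.

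I expect the main obstacle to be the normal-form step, namely promoting the given transverse single intersection to an honest standard Weinstein cancelling pair. The delicate point is the dependence on the index: for $k\le n-2$ the handle of $q$ is subcritical and its attaching sphere is flexible, so the isotopy bringing $\Lambda_q$ and the belt sphere of $p$ into standard position is unobstructed, whereas in the extreme case $k=n-1$ the handle of $p$ is critical and its attaching sphere is Legendrian, so one must invoke the corresponding cancellation theorem of \cite{CE2012}, where the subcriticality of the lower handle still provides enough room. Once the local model is established, the identification of its intermediate hypersurface with $\S^k\times\S^{2n-k-1}$ and the two connected-sum conclusions are routine.
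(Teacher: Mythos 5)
Your overall strategy --- reduce to a standard local model of a cancelling pair and read off both connected--sum descriptions from it --- is a reasonable plan, but the step on which everything rests is asserted rather than proved, and it is precisely the content of the lemma. You claim that ``the Weinstein tubular neighborhood theorem puts a neighborhood of $D_q\cup\gamma\cup D_p$ into the standard normal form of a cancelling pair\dots obtained by attaching the index-$k$ and the cancelling index-$(k+1)$ Weinstein handle to a Darboux ball.'' The Weinstein neighborhood theorem does not apply to this set (it is not a smooth closed isotropic submanifold: it is the closure of the descending disc of $p$, with singular behaviour at $q$ and along the attaching sphere $\Lambda_q\subset M$), and the cancellation result of \cite{CE2012} that one can actually cite (proposition 12.22) only produces a Weinstein homotopy killing the pair; it does not hand you a normal form in which $\Lambda_q$ sits as the standard isotropic unknot, with the correct conformal symplectic framing, inside a Darboux ball of $M$. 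That last statement is exactly the ``key point'' of Hatcher--Lawson (cancelling position $\Rightarrow$ trivially attached), which in the smooth category already requires an argument (Smale's trading trick) that the paper explains does \emph{not} carry over to the Weinstein setting for $k=n-1$. So as written your proof assumes its hardest step. A related red flag: you invoke looseness/flexibility of the attaching spheres to ``bring $\Lambda_q$ and the belt sphere of $p$ into standard position,'' but the lemma has no flexibility hypothesis and needs none --- the single transverse intersection is given as a hypothesis, not something to be achieved.

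The paper's proof supplies the missing geometry by a genuinely different route. It first cancels the pair (proposition 12.22) to conclude $M\cong M'$, reducing to one of the two claims. It then creates an auxiliary pair of critical points $s,r$ of index $0,1$ below $q$, isotopes the part of the descending disc of $p$ lying below $q$ into the ascending disc of $s$, and lowers $q$ into the resulting ball. The component $V$ of the sublevel set containing $s$ and $q$ then has skeleton an isotropic $k$-sphere $\Sigma$ (the descending disc of $q$ coned off at $s$) which bounds the embedded isotropic $(k+1)$-disc $\overline{D_p}$; this bounding disc is what trivializes the symplectic normal bundle of $\Sigma$, so a tube around $\Sigma$ identifies $\partial V$ with $\S^k\times\S^{2n-k-1}$, and the index-$1$ point $r$ exhibits $N$ as $M\#\partial V$. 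In other words, the paper never proves that $\Lambda_q$ is standard in $M$; it rearranges the Weinstein structure so that the connected-sum decomposition becomes visible through an index-$1$ handle instead. If you want to pursue your more direct route, the substance you must supply is a proof that $\overline{D_p}\cap M$ is an embedded isotropic $k$-disc with boundary $\Lambda_q$ inducing the handle framing --- which is essentially the same geometric input the paper extracts, in a controlled setting, in its Step 4.
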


\begin{proof}
\textit{Step 0 : Cancellation.}

According to  proposition 12.22 in \cite{CE2012}, there is a Weinstein homotopy from $(\omega,X,\phi)$ to a Weinstein structure without critical points. In particular $M$ and $M'$ are contactomorphic (and connected) and we only need to prove that $N$ is contactomorphic to $M \# \S^k \times \S^{2n-k-1}$.

\textit{Step 1:}
By a Weinstein homotopy we create a pair of critical points $r$ and $s$ of index $1$ and $0$ respectively below $q$ (see proposition 12.21 in \cite{CE2012}). The intersection of the ascending disc of $s$ with a level set $P$ between $q$ and $r$ is an open disc $D$ of codimension zero in $P$.

\textit{Step 2:}
After a Weinstein homotopy, we can assume that $X$ is standard near $p$ and $q$ (see proposition 12.12 in \cite{CE2012}). The closure of the descending disc of $p$ then intersects $P$ in an isotropic closed disk $D'$ of dimension $k$. Since $P$ is connected, there is a contact isotopy of $P$ which takes $D'$ inside $D$. We realize this contact isotopy by a Weinstein homotopy which is fixed up to scaling above $P$ using lemma 12.5 from \cite{CE2012}.

\begin{figure}
\centering
\def\svgwidth{400pt}
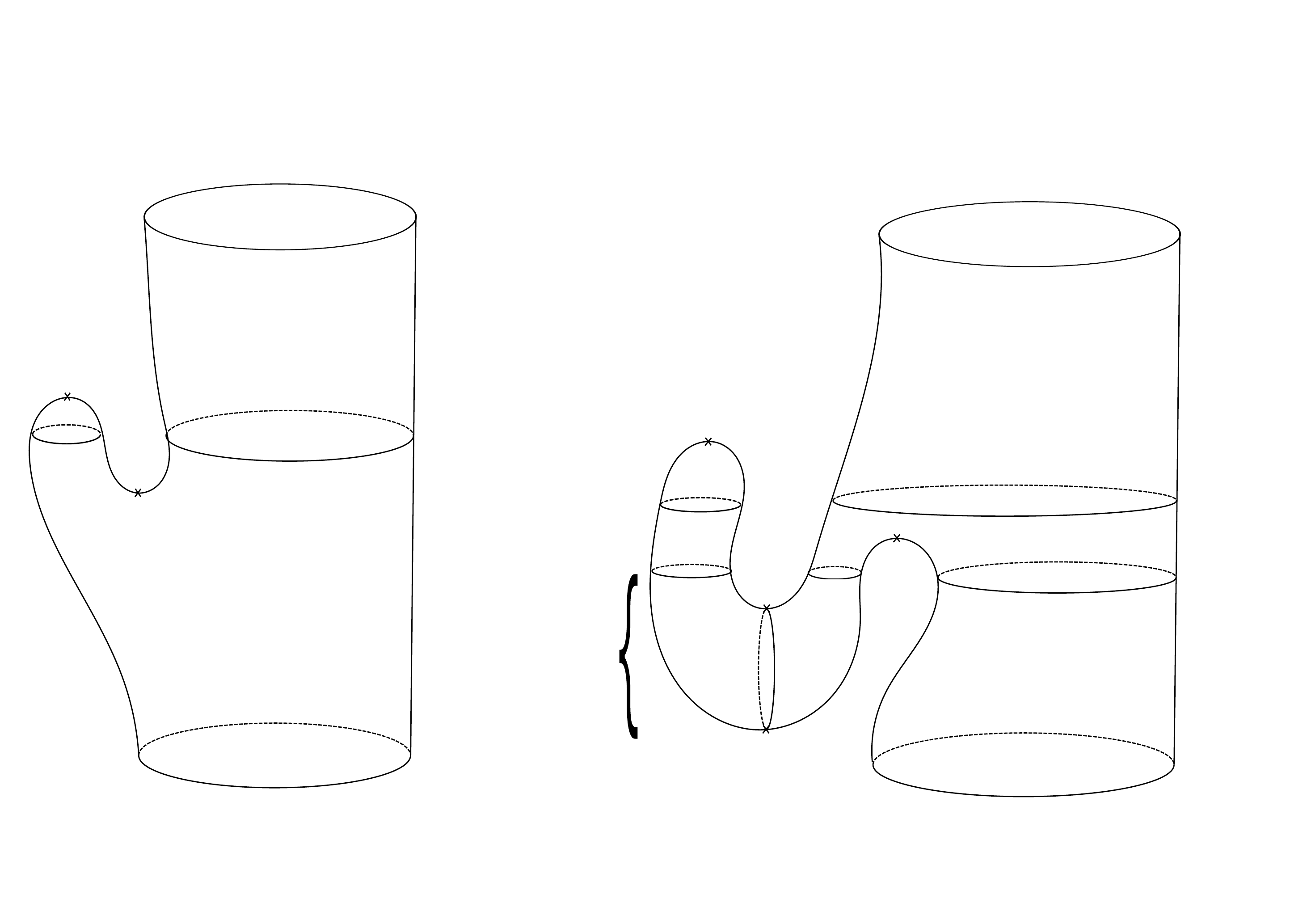
\caption{Picture of $W$ before and after the Weinstein homotopy.}
\label{fig-cancel}
\end{figure}

\textit{Step 3:}
By a Weinstein homotopy we lower $q$ to a level set between $f(r)$ and $f(s)$.  Denote by $V$ the connected component containing $s$ of a sublevel set just below $r$ (see figure \ref{fig-cancel}). We obtain a Weinstein cobordism from $M \cup \del V$ to $N$ with only one critical point $r$ of index $1$ and whose descending disc intersects both $M$ and $\del V$, $N$ is therefore contactomorphic to $M \# \del V$.

\textit{Step 4:}
We now prove that the boundary of $V$ is contactomorphic to $\S^k\times \S^{2n-k-1}$. After a Weinstein homotopy supported in a neighborhood of $s$, we can assume (see proposition 12.12 in \cite{CE2012}) that the Weinstein structure is conform to the model:
\[(\omega = \sum_{i=1}^n\d x_i \wedge \d y_i, X =\frac{1}{2} \sum_{i=1}^n x_i \del_{x_i} + y_i \del_{y_i}, \phi  = \phi(s)+ \sum_{i=1}^n x_i^2 + y_i^2).\]

The closure of the descending disc of $p$ intersects a small sphere around $s$ in an isotropic closed disc $D''$. There is a contact isotopy of this sphere which takes $D''$ to the disc given by:
\[\{x_{k+2}= \dots = x_n =0, x_{k+1} \geq 0, y_1 = \cdots = y_n = 0 \},\]
that we realize by a further Weinstein homotopy using lemma 12.5 from \cite{CE2012}. Now the closure of the descending disc of $p$ is an embedded disc of dimension $k+1$ whose boundary is the skeleton $\Sigma$ of $V$. In particular $\Sigma$ is an embedded isotropic sphere with trivial symplectic normal bundle. We claim that there is a small riemannian tube around $\Sigma$ whose boundary is transverse to $X$ and conclude that $\del V$ is contactomorphic to $\S^k \times \S^{2n-k-1}$.
\end{proof}

We now prove theorem \ref{main:stable}.

\begin{proof}[Proof of theorem \ref{main:stable}]

\textit{Step 1 : Reducing to a normal form.}

According to the proof of the s-cobordism theorem (see \cite{K65}), there is a path $\phi_s$ of functions with birth-death type accidents and critical points of index less or equal to $n$ such that $\phi_0=\phi$ and $\phi_1$ has a regular level set $N$ with $l$ critical points $p_1, \dots, p_l$ of index $k+1$ above $N$, $l$ critical points $q_1,\dots, q_l$ of index $k$ below $N$ and no other critical points. According to theorem 14.1 in \cite{CE2012} there is a Weinstein homotopy $(\omega_s, X_s, \phi_s)_{s \in [0,1]}$ of flexible Weinstein structures starting from $(\omega, X, \phi)$. After a perturbation of $X_1$ we can also assume that is is Morse-Smale; we rename $(\omega_1,X_1,\phi_1)$ back to $(\omega, X, \phi)$.

\begin{figure}[!h]
\centering
\def\svgwidth{300pt}
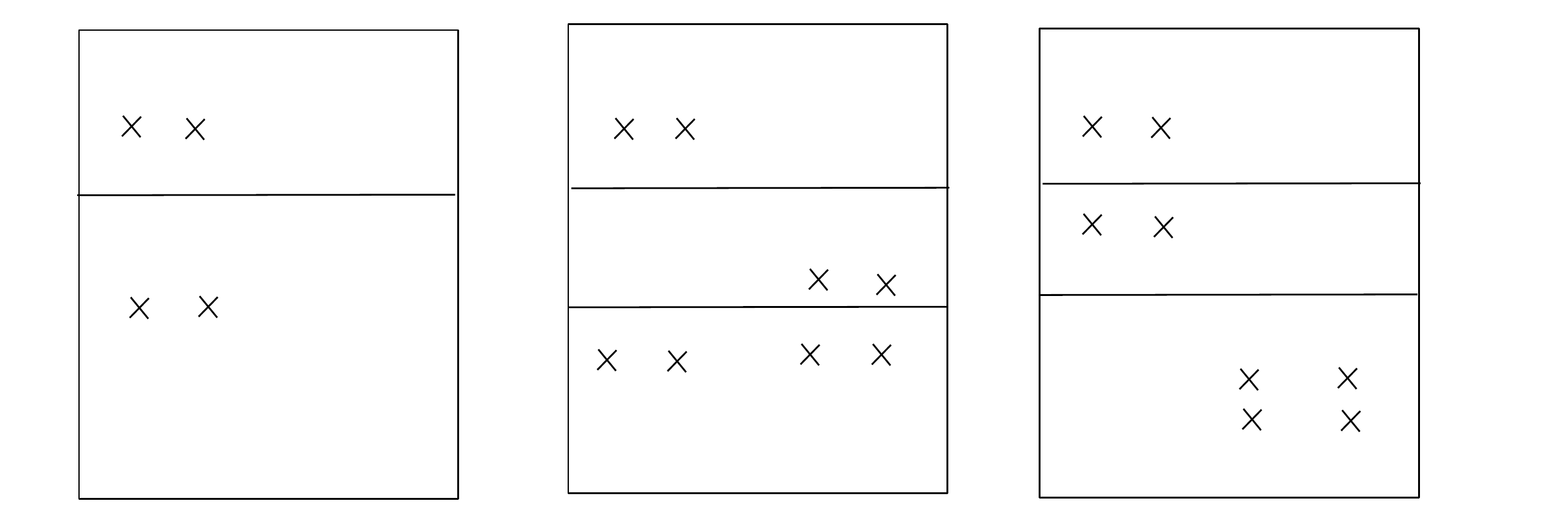
\caption{Schematic picture of the successive Weinstein structures on the cobordism $W$ with $l=2$.}
\label{slides}
\end{figure}

\textit{Step 2 : Creating cancelling pairs of critical points.}

By a Weinstein homotopy of flexible Weinstein structures provided (see proposition 12.21 in \cite{CE2012}) we create $l$ cancelling pairs of critical points of index $k$ and $k+1$ below $N$, denoted respectively $q'_1,\dots,q'_l$ and $p'_1, \dots, p'_l$, with no trajectories joining the critical points $q_1, \dots, q_l$ and $p_1, \dots, p_l$ (see figure \ref{slides}). The effect on the Morse complex is as follows. In a universal cover $\tilde{W} \to W$ with automorphism group $\pi\simeq \pi_1 W$, the Morse complex of $(X, \phi)$ is a chain complex over the ring $\Z[\pi]$ which looks like~:
\[0 \to C_{k+1} \overset{\partial_{k+1}}{\longrightarrow} C_k \to 0.\]
By choosing lifts $\tilde p_i$, $\tilde q_i$, $\tilde p'_i$ and $\tilde q'_i$ of the critical points of $\phi$ to $\tilde{W}$ and orientations for unstable manifolds at each critical point, we obtain bases $(\tilde p_1,\dots,\tilde p_l, \tilde p'_1, \dots, \tilde p'_l)$ of $C_{k+1}$ and $(\tilde q_1, \dots, \tilde q_l, \tilde q'_1, \dots, \tilde q'_l)$ of $C_k$. The corresponding matrix of $\partial_{k+1}$ is the stabilized matrix

\[\begin{pmatrix} A & 0 \\ 0 & 1 \end{pmatrix} \in \GL_{2l}(\Z[\pi]).\]

with $A \in \GL_l(\Z[\pi])$.

\textit{Step 3 : A few handleslides.}

Take an intermediate level set $N'$ separating index $k$ and index $k+1$ critical points. In the cobordism between $M$ and $N'$, there are only critical points of index $k$. We claim that there is a homotopy of gradient-like vector field $Y_t$ for $\phi$ such that $Y_0=X$, $Y_t=X$ above $N'$ and such that the boundary operator $\del_{k+1}$ for $Y_1$ has matrix 

\[\begin{pmatrix}
1 & 0 \\
0 & A
\end{pmatrix}\in \GL_{2l}(\Z[\pi]).\]

Indeed, one can realize this homotopy by a sequence of handleslides (see \cite{K65}) between critical points of index $k$ corresponding to the following \emph{row} operation on matrices~:

\[\begin{pmatrix}
A & 0 \\ 0 & 1
\end{pmatrix} \to
\begin{pmatrix}
A & -1 \\ 0 & 1
\end{pmatrix} \to
\begin{pmatrix}
A & -1 \\ A & 0
\end{pmatrix} \to
\begin{pmatrix}
0 & -1 \\ A & 0
\end{pmatrix} \to
\begin{pmatrix}
0 & -1 \\ A & A
\end{pmatrix} \to
\begin{pmatrix}
1 & 0 \\ A & A
\end{pmatrix} \to
\begin{pmatrix}
1 & 0 \\ 0 & A
\end{pmatrix},\]

According to lemma 14.10 in \cite{CE2012}, there is a flexible Weinstein homotopy $(\omega_s, X_s, \phi)$ which is fixed up to scaling above $N'$ and such that $X_1$ is homotopic to $Y_1$ in the space of Morse-Smale gradient-like vector fields for $\phi$. In particular, the boundary operator $\del_{k+1}$ for $X_1$ and $Y_1$ are equal. Rename $(\omega_1,X_1, \phi)$ back to $(\omega, X, \phi)$.

\textit{Step 4 : Applying the Whitney trick}

Since the $Z[\pi]$ intersections numbers of descending spheres of $q'_1, \dots, q'_l$ with ascending spheres of $p_1, \dots, p_l$ are zero, we can apply the Whitney trick to make them disjoint by a smooth isotopy. By the flexibility hypothesis, the descending spheres are loose (or subcritical) and can therefore be made disjoint by legendrian isotopy using Murphy's h-principle (see \cite{M2012a}) (or Gromov's h-principle see \cite{CE2012} theorem 7.11). We can then raise the critical values of $p_1, \dots, p_l$ above the critical values of $q'_1, \dots, q'_l$. 
Now in the cobordism containing the critical points $p_1, \dots, p_l$ and $q_1, \dots, q_l$, the boundary operator $\del_{k+1}$ in the Morse complex  is the identity matrix. Successive application of the Whitney trick and of lemma~14.11 in \cite{CE2012} allows us to make the critical points $p_1, \dots, p_l$ in cancellation position with $q_1, \dots, q_l$ by a Weinstein homotopy. Inductively applying lemma~\ref{cancel} then shows that $N$ is contactomorphic to $M' \# (\S^k \times \S^{2n-k-1})^{\# l}$.

\textit{Step 5 : repeating everything}

To prove that $N$ is also contactomorphic to $M \# (\S^k \times \S^{2n-k-1})^{\# l}$ we repeat steps 2, 3, 4 analogously \emph{above} $N$. Note that in step $3$ we use analogous \emph{column} instead of row operations on matrices because we do handleslides between critical points of index $k+1$ instead of $k$.
\end{proof}

\begin{proof}[Proof of corollary \ref{main:cor}]
Let $(W, M, M')$ be an h-cobordism with a flexible Weinstein structure inducing $\xi$ and $\xi'$. Denote by $\tau \in \Wh(\pi_1M)$ the Whitehead torsion of $W$. According to the $s$-cobordism theorem, there is an h-cobordism $(V,N,N')$ with Whitehead torsion $\tau$ (we identify $\pi_1 M \simeq \pi_1 N$). Theorem 13.1 in \cite{CE2012} allows us to construct a flexible Weinstein structure on $V$ inducing contact structures $\zeta$ on $N$ and $\zeta'$ on $N'$ (the hypothesis of theorem 13.1 are fulfilled, see \cite{C2014}). According to theorem \ref{main:stable} (the whitehead torsion of $W$ is represented by a matrix of size $l$ because $\GL_l(\Z[\pi]) \to \Wh(\pi)$ is surjective), $(N,\zeta) \# (\S^{k} \times \S^{2n-k-1})^{\# l}$ is contactomorphic to $(N',\zeta') \# (\S^k \times \S^{2n-k-1})^{\# l}$, thus we are led to prove that $(M' ,\xi')$ is contactomorphic to $(N',\zeta')\# (\S^k \times \S^{2n-k-1})^{\# l}$. For this we consider the trivial Weinstein structure on $(\S^k \times \S^{2n-k})^{\# l} \times [0,1]$ and perform a connected sum operation with $V$ along the cobordisms (that is we glue them along a neighbourhood of an arc going from $\del_-$ to $\del_+$). We get a flexible Weinstein cobordism from $(M, \xi)$ to $(N', \zeta')\# (\S^k \times \S^{2n-k-1})^{\# l}$ with Whitehead torsion $\tau$. By the $s$-cobordism theorem, this cobordism is diffeomorphic to $W$ by a diffeomorphism relative to $M$. Since there is only one non-degenerate two form extending $\xi$ up to homotopy (see for example lemma 2.7 in \cite{C2014}), we have two flexible Weinstein structures on $W$ that are formally homotopic and by theorem 14.3 of \cite{CE2012}, we get that $M'$ is contactomorphic to $N'\# (\S^k \times \S^{2n-k-1})^{\# l}$.
\end{proof}

\section{Non-conjugate almost-contact structures}\label{sec:almost}

\begin{thm}\label{main:almost}
For $n \geq 3$, the closed oriented manifold $M^{2n-1} = \L(5,1) \times \S^{2n-4}$ carries two contact structures $\xi$ and $\xi'$ that are not conjugate by a diffeomorphism of $M$ (even as almost-contact structures) but which have exact symplectomorphic symplectizations. Moreover they bound Weinstein structures on $V=\L(5,1) \times \D^{2n-3}$ that are not conjugate as non-degenerate $2$-forms but have exact symplectomorphic completions.
\end{thm}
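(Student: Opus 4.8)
The plan is to realise $(M,\xi)$ and $(M,\xi')$ as the two boundary components of a single flexible Weinstein \emph{inertial} h-cobordism --- an h-cobordism whose two ends are diffeomorphic to $M$ but which is not an $s$-cobordism --- and to detect the non-conjugacy of the induced structures through the first Chern class. The arithmetic that makes $\L(5,1)$ the right choice is that $\Wh(\Z/5)\cong\Z$ is nonzero while $\Wh(\Z/3)=0$.

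First I would use a nonsimple self-homotopy equivalence of $\L(5,1)$. The automorphism $t\mapsto t^2$ of $\pi_1\L(5,1)=\Z/5$ is realised by a degree $-1$ self-homotopy equivalence $f$ whose Whitehead torsion is nonzero; concretely $\tau(f)$ is twice the class of the cyclotomic unit $1+\zeta_5$ in $\Wh(\Z/5)\cong\Z$, and this is exactly the classical obstruction to $\times 2$ being realised by a diffeomorphism of $\L(5,1)$. Composing $f\times\id_{\S^{2n-4}}$ with an orientation-reversing self-diffeomorphism of $M=\L(5,1)\times\S^{2n-4}$ (which exists since $\L(5,1)\cong-\L(5,1)$) produces an orientation-preserving self-homotopy equivalence $g$ of $M$ that induces multiplication by a unit $u\in\{2,3\}$ on $\pi_1 M=\Z/5$ and still has $\tau(g)\neq 0$. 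The realisation theorem for Whitehead torsion then yields an h-cobordism $(W,M,M)$ with both ends diffeomorphic to $M$, reference homotopy equivalence $g$ from top to bottom, and torsion $\tau(g)\neq 0$.

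Next I would equip $W$ with a flexible Weinstein structure. As in \cite{C2014}, the formal hypotheses of Theorem 13.1 in \cite{CE2012} are met, and I may moreover prescribe the almost-complex structure so that $c_1(W)\in H^2(W)\cong H^2(M)$ is nonzero in the torsion summand $H^2(\L(5,1))=\Z/5$. Writing $\xi,\xi'$ for the contact structures induced on the bottom and top, the fact that both end-inclusions are homotopy equivalences with composite $g$ gives $c_1(\xi')=g^*c_1(\xi)$, so the two Chern classes differ by the unit $u$ on the $\Z/5$ summand. Since $W$ is a flexible Weinstein h-cobordism, the result of \cite{C2014} recalled in the introduction shows the symplectizations of $(M,\xi)$ and $(M,\xi')$ are exact symplectomorphic. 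A conjugating diffeomorphism $\psi$ would give $(g\psi)^*c_1(\xi)=c_1(\xi)$, forcing $\psi$ to act on $H^2(\L(5,1))=\Z/5$ by $\times u^{-1}$ with $u^{-1}\in\{2,3\}$; as every self-diffeomorphism of $M$ acts on this summand only by $\pm 1$ (diffeomorphisms are simple, and $\times 2,\times 3$ require nonzero torsion), and $c_1(\xi)\neq 0$ there, no such $\psi$ exists, even at the level of almost-contact structures. The filling statement is obtained identically: take a flexible filling $V_0\cong\L(5,1)\times\D^{2n-3}$ of $(M,\xi)$ and set $V_1=V_0\cup_M W$, a flexible filling of $(M,\xi')$; the extension $f\times\id_{\D^{2n-3}}$ of $g$ over $V_0$ lets one absorb $W$ and identify $V_1\cong\L(5,1)\times\D^{2n-3}$, invertibility of the Weinstein cobordism $W$ makes the two completions exact symplectomorphic, and the same Chern-class computation shows the two non-degenerate $2$-forms are not conjugate.

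The main obstacle is the purely topological bookkeeping rather than the Weinstein-theoretic input, which is by now formal: one must genuinely realise the nonzero torsion $\tau(g)$ by an h-cobordism whose top is \emph{diffeomorphic} to $M$ via $g$ (not merely homotopy equivalent), and one must pin down the action of $\operatorname{Diff}(M)$ on $H^2(\L(5,1))=\Z/5$ tightly enough to exclude the units $2$ and $3$. Both rest on the nonvanishing of the relevant classes in $\Wh(\Z/5)$ and on the classification of self-equivalences of lens spaces; once these are in place, flexibility (Theorem 13.1 of \cite{CE2012}) and the symplectization statement of \cite{C2014} assemble the proof.
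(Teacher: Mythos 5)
Your overall strategy --- produce an inertial flexible Weinstein h-cobordism from $M$ to itself, read off the non-conjugacy from the action of the induced homotopy equivalence on the $\Z/5$ torsion of $H^2$ via $c_1$, and rule out a conjugating diffeomorphism because it would have to act by $\times 2$ on $\pi_1 M=\Z/5$, which torsion invariants forbid --- is the same as the paper's. The genuine gap is in your first step: you invoke ``the realisation theorem for Whitehead torsion'' to produce an h-cobordism $(W,M,M)$ whose top end is \emph{diffeomorphic} to $M$ and whose top-to-bottom homotopy equivalence is the prescribed $g$. The realisation theorem gives neither of these things: it produces $(W,M,M')$ with $M'$ only h-cobordant to $M$ (whether a given torsion is inertial is exactly the delicate question --- compare $\L(7,1)\times\S^2$ and $\L(7,2)\times\S^2$, h-cobordant but not diffeomorphic, which the paper itself uses as an example), and it gives no control on the induced self-map of $\pi_1$ or of $H^2$. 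You correctly flag this as ``the main obstacle'' but do not resolve it, and resolving it is the real content of the construction. The paper's solution is a Mazur-type embedding trick: thicken the homotopy equivalence $f\times 0:\L(5,1)\to V=\L(5,1)\times\D^{2n-3}$ to an embedding $g:V\to\interior V$ (possible because every rank $\geq 3$ real vector bundle over $\L(5,1)$ is trivial, so the normal bundle is trivial) and set $W=V\setminus g(\interior V)$. Both ends of $W$ are then literally copies of $M=\partial V$, the retraction of $W$ onto $g(V)$ restricted to $\partial V$ realises $\times 2$ on $\pi_1$ and hence on the $\Z/5$ in $H^2$, and the two nested fillings $g(V)\subset V$ are precisely the Weinstein domains of the second assertion, with exact symplectomorphic completions by the Mazur argument.

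Two smaller points. Your justification that every self-diffeomorphism of $M$ acts by $\pm1$ on $\pi_1=\Z/5$ (``diffeomorphisms are simple, and $\times2,\times3$ require nonzero torsion'') needs the product formula for Reidemeister torsion, since $M=\L(5,1)\times\S^{2n-4}$ is not itself a lens space: the paper computes $\Delta(M)=(\zeta-1)^4\neq(\zeta^{\pm2}-1)^4$, and similarly for $V$; naming only the classification of self-equivalences of lens spaces does not cover this. Also, for $n=3$ one has $H^2(M)\simeq\Z/5\oplus\Z$ and a diffeomorphism may mix the summands (the paper's $\Psi^*a=\pm a+\alpha c_1(\xi)$); your argument survives because torsion classes map to torsion classes, but the case deserves mention. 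The orientation-reversing detour you take is unnecessary in the paper's construction.
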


The topological phenomenon that we will make use of is the following.

\begin{lem}\label{simple}
No diffeomorphism of $M$ may act on $\pi_1 M=\Z/5$ by multiplication by $\pm 2$. The same holds for $V$.
\end{lem}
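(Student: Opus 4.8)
The plan is to detect the action on $\pi_1 M = \Z/5$ by means of Reidemeister--Franz torsion, which is a diffeomorphism invariant (indeed a simple-homotopy invariant, as every diffeomorphism is a simple homotopy equivalence). The group $\operatorname{Aut}(\Z/5) = (\Z/5)^\times$ consists of multiplication by $m \in \{\pm 1, \pm 2\}$, and the content of the lemma is that $m = \pm 2$ cannot be realized. First I would fix a primitive fifth root of unity $\zeta$ and, for $j \in \{1,2,3,4\}$, the one-dimensional unitary representation $\rho_j \colon \Z/5 \to \operatorname{U}(1)$, $t \mapsto \zeta^j$. Since $\rho_j$ is nontrivial, the $\rho_j$-twisted homology of $\L(5,1)$ vanishes, hence so does that of $M$, and the torsion $\tau_{\rho_j}(M)$ is defined up to the standard indeterminacy: multiplication by a unit $\pm \zeta^{jk}$, together with the complex-conjugation ambiguity attached to a possible reversal of orientation. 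A self-diffeomorphism $f$ with $f_* =$ (multiplication by $m$) intertwines the $\rho_j$-twisted complex with the $\rho_{mj}$-twisted one, so it forces $\tau_{\rho_1}(M) \doteq \tau_{\rho_m}(M)$, where $\doteq$ denotes equality up to this indeterminacy.

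Next I would compute these torsions through the product structure. As $S^{2n-4}$ is simply connected and even-dimensional with $\chi(S^{2n-4}) = 2$, the multiplicativity of torsion gives $\tau_{\rho_j}(M) \doteq \tau_{\rho_j}(\L(5,1))^{2}$, while the classical Franz--de Rham computation yields $\tau_{\rho_j}(\L(5,1)) \doteq (\zeta^j - 1)^2$; hence $\tau_{\rho_j}(M) \doteq (\zeta^j - 1)^4$. For $V = \L(5,1) \times \D^{2n-3}$ the disc factor is contractible, the inclusion $\L(5,1) \hookrightarrow V$ is a simple homotopy equivalence, and the same computation gives $\tau_{\rho_j}(V) \doteq (\zeta^j - 1)^2$. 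In both cases the invariant is, up to units and conjugation, a power of $\zeta^j - 1$.

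Finally I would extract the obstruction from an elementary modulus estimate, whose virtue is to absorb the whole indeterminacy at once, since every admissible unit $\pm \zeta^k$ has modulus $1$. For $m = 2$ one computes $\frac{\zeta - 1}{\zeta^2 - 1} = \frac{1}{\zeta + 1}$ with $|\zeta + 1| = 2\cos(\pi/5) = \frac{1 + \sqrt 5}{2} \neq 1$, and allowing complex conjugation replaces $\rho_2$ by $\rho_3$, giving $\frac{\zeta - 1}{\zeta^3 - 1} = \frac{1}{\zeta^2 + \zeta + 1}$ with $|\zeta^2 + \zeta + 1| = \frac{\sin(3\pi/5)}{\sin(\pi/5)} = \frac{1 + \sqrt 5}{2} \neq 1$. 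Raising to the relevant power ($4$ for $M$, $2$ for $V$) leaves the modulus different from $1$, so $\tau_{\rho_1} \not\doteq \tau_{\rho_{\pm 2}}$ and multiplication by $\pm 2$ is impossible; by contrast $m = \pm 1$ yields the ratios $1$ and $-\zeta$, genuine units, consistently realized by the identity and by complex conjugation.

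The step I expect to be the delicate one is not the computation but the correct bookkeeping of the torsion's indeterminacy, and the conceptual point that weaker invariants do not suffice. The linking form of $\L(5,1)$ takes the value $\tfrac15$, which under multiplication by $2$ becomes $\tfrac45 \equiv -\tfrac15$; since an orientation-reversing map negates the linking form, this invariant alone fails to obstruct $m = \pm 2$. Likewise $\pm 2$ \emph{is} realized by a homotopy self-equivalence of $\L(5,1)$, because $(\pm 2)^2 \equiv -1 \pmod 5$. What the torsion sees, and these coarser invariants miss, is precisely that such an equivalence is not simple; the modulus argument above is exactly the manifestation of this failure, and it is the heart of the proof.
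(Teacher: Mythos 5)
Your proof is correct and follows essentially the same route as the paper: Reidemeister torsion at a primitive fifth root of unity, the product formula over the simply connected factor, and the observation that $(\zeta^{\pm2}-1)$ and $(\zeta-1)$ have different moduli, which no unit $\pm\zeta^k$ or complex conjugation can repair. The only difference is that you carry out the modulus computation $|\zeta+1|=2\cos(\pi/5)=\tfrac{1+\sqrt5}{2}$ explicitly, which the paper leaves as an assertion.
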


\begin{proof}
This is an application of simple homotopy theory. We sketch the proof and refer to \cite{Mil61} for more details on Reidemeister torsion. Denote by $\Delta$ the Reidemeister torsion with respect to the ring homomorphism $\Z[\Z/5]=\Z[t]/(t^5-1) \to \C$ that sends $t$ to $\zeta = e^{\frac{i2\pi}{5}}$; this is an element in the quotient group $\C^*/\langle \pm \zeta\rangle$. We have (see \cite{Mil61} p.583, note that the formula for $\Delta$ is the inverse because of a different convention)
\[\Delta(L(5,1))= (\zeta-1)^2,\]

and using the product formula (see \cite{Mil61} p.587), we get
\[\Delta(M)=(\zeta-1)^4, \quad \Delta(V)=(\zeta-1)^2.\]

If $\Psi : M \to M$ is a diffeomorphism inducing multiplication by $\pm 2$ on $\pi_1=\Z/5$, we would have (by invariance of Reidemeister torsion by diffeomorphism)
\[\Psi_* \Delta(M) = (\zeta^{\pm 2}-1)^4 = (\zeta-1)^4=\Delta(M)\]
which is false (these complex numbers have different moduli); and likewise for $V$ in place of $M$.
\end{proof}

\begin{proof}[Proof of theorem \ref{main:almost}]
\textit{Step 1 : Construction of an h-cobordism.}

\begin{figure}[!h]
\centering
\def\svgwidth{250pt}
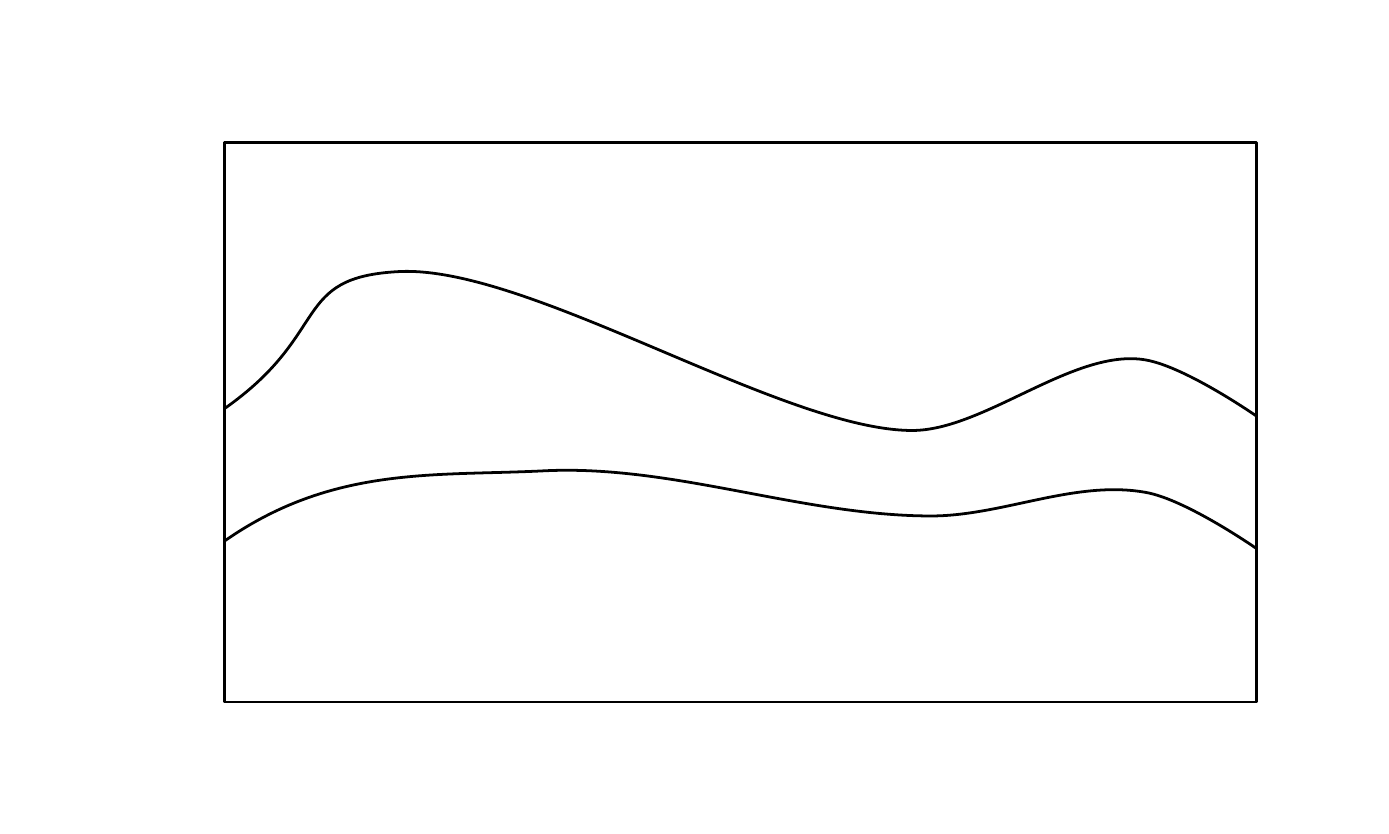
\caption{The h-cobordism $W$.}
\label{fig-hcob}
\end{figure}

The arguments in this step are similar to that in \cite{Mil61}. Note that $M$ is the (oriented) boundary of $V^{2n}=\L(5,1) \times \D^{2n-3}$. According to the homotopy classification of maps between lens spaces (see \cite{dRMK67, Coh73}), there is a homotopy equivalence $f : \L(5,1) \to \L(5,1)$ which induces multiplication by $2$ on $\pi_1$. The map $f\times 0 : \L(5,1) \to V$ is homotopic to an embedding $g$ (by general position for $n \geq 4$ and by Haefliger's embedding theorem \cite{Hae61} for $n =3$). The normal bundle of $g$ is trivial; in fact every real vector bundle of rank $k \geq 3$ on $\L(5,1)$ is trivial because the cohomology groups $\H^i(\L(5,1); \pi_{i-1} \O(k))$ all vanish. Therefore we can extend $g$ to an embedding $V \to \interior V$ (still denoted by $g$); the region $W = V \setminus g(\interior V)$ is a non-trivial h-cobordism from $M$ to $M$ (see figure \ref{fig-hcob}).

\textit{Step 2 : Construction of the Weinstein and contact structures.}

There exists a complex line bundle $\eta \to V$ with $c_1(\eta)\neq 0 \in \H^2(V)\simeq \Z/5$ ($\Z$ coefficients are understood for all homology and cohomology groups appearing in the sequel). The real vector bundle $\eta \oplus \underline{\R}$ is trivial ($\underline{\R}^k$ and $\underline{\C}^k$ denote trivial real and complex vector bundles), as well as the tangent bundle $\T \L(5,1)$ (it follows from the vanishing of the cohomology groups as before). Hence there is a real isomorphism
\[\T V \overset{\sim}{\longrightarrow} \eta \oplus \underline{\C}^{n-1},\]
and we denote by $J$ the pulled back complex structure on $\T V$. We have $c_1(J)=c_1(\eta)$. The pullback $J'=g^*J$ is another complex structure on $V$ and we have $c_1(J')=g^* c_1(J)=2 c_1(J)$ because (by Poincaré duality) $g$ (as well as $f$) acts by multiplication by $2$ on $\H^2(V) \simeq \H^2(\L(5,1)) \simeq \H_1(\L(5,1)) \simeq \pi_1 \L(5,1)$. Since $V$ has a Morse function with critical points of index $\leq 3$, theorem 13.1 of \cite{CE2012} allows us to construct a Weinstein structure on $V$ formally homotopic to $J'$; it induces a contact structure $\xi'$ on $M$. By pushing-forward by $g$, we get a Weinstein structure on $g(V) \subset V$. Since $W$ is an h-cobordism, as argued in \cite{C2014} the conditions of theorem 13.1 from \cite{CE2012} are met and we can construct a \emph{flexible} Weinstein structure on $W$ that extends that of $g(V)$. Hence we get a Weinstein structure on $V$ formally homotopic to $J$; it induces another contact structure $\xi$ on $M$. It then follows from a Mazur trick argument (see \cite{C2014}) that the symplectizations of $(M, \xi)$ and $(M, \xi')$ are exact symplectomorphic and also that the completions of $g(V)$ and $V$ are exact symplectomorphic.

\textit{Step 3 : Proof that the contact and Weinstein structures are not conjugate.}

We will show in fact that $c_1(\xi)$ and $c_1(\xi')$ are not conjugate by a diffeomorphism. Assume for contradiction that $\Psi : M \to M$ is a diffeomorphism such that $\Psi^*c_1(\xi)=c_1(\xi')$; by analyzing the action of $\Psi$ on cohomology we will show that $\Psi$ necessarily acts on $\pi_1$ by multiplication by $\pm 2$. Since $\H^*(\S^{2n-4})$ is free, we have a Künneth isomorphism (of graded rings)~:

\[\H^*(M) \overset{\sim}{\longrightarrow} \H^*(\L(5,1)) \otimes \H^*(\S^{2n-4}).\]

The inclusion $i: M \to V$ induces an isomorphism 
\[\H^2(V) \overset{\sim}{\longrightarrow} \H^2(\L(5,1))\otimes \H^0(\S^{2n-4})\simeq \Z/5;\] and we have $c_1(\xi)=i^*c_1(J)\neq 0$ and $c_1(\xi')=i^*c_1(J')=i^* (2 c_1(J))=2c_1(\xi)$. In degree $2n-4$, choose a generator $a$ of $\H^{0}(\L(5,1)) \otimes \H^{2n-4}(\S^{2n-4})\simeq \Z$, we have $\Psi(a) = \pm a + \alpha c_1(\xi)$ for $\alpha \in \Z/5$ if $n=3$ and $\Psi(a)=\pm a$ if $n>3$. Then $c_1(\xi) \cup a$ generates $\H^{2n-2}(M) \simeq \Z/5$ and we have:
\[\Psi^* (c_1(\xi) \cup a)=\Psi^*c_1(\xi) \cup \Psi^*a = c_1(\xi') \cup \Psi^*a = 2 c_1(\xi) \cup \Psi^*a=\pm 2 c_1(\xi) \cup a.\]
Hence, by Poincaré duality, $\Psi$ induces multiplication by $2$ on $\H_1(M) \simeq \H^{2n-2}(M)$, in contradiction with lemma \ref{simple} above.

Likewise if $\Psi : V \to V$ is a diffeomorphism that conjugates $J$ and $J'$, then $\Psi^*c_1(J')=c_1(J)$, and then $\Psi$ acts by multiplication by $2$ on $\H^2(V)\simeq \H_1(V)\simeq \pi_1(V)$, so cannot be homotopic to a diffeomorphism according to lemma \ref{simple}.
\end{proof}

\bibliographystyle{amsalpha}
\bibliography{/Users/sylvaincourte/Documents/Maths/Latex/biblio}

\end{document}